\documentclass[11pt]{article}
\usepackage{amssymb,amsmath,amsfonts,amsthm,authblk,hyperref,here,enumerate,xfrac,mathtools,graphicx,pstool,xcolor,pgf,tikz}
\usetikzlibrary{arrows,arrows.meta}
\usetikzlibrary{decorations.pathreplacing,decorations.markings,hobby}

\usepackage[OT2,OT1]{fontenc} 
\newcommand\cyr
{
\renewcommand\rmdefault{wncyr} \renewcommand\sfdefault{wncyss} \renewcommand\encodingdefault{OT2} \normalfont
\selectfont
}

\DeclareTextFontCommand{\textcyr}{\cyr} \def\cprime{\char"7E }

\linespread{1.1}
\addtolength{\hoffset}{-0.7cm}
\addtolength{\textwidth}{1.4cm}
\addtolength{\textheight}{0cm}
\addtolength{\voffset}{0cm}
\pagestyle{plain}
\numberwithin{equation}{section}

\newcommand{\Z}{\mathbb Z}
\newcommand{\C}{\mathbb C}
\definecolor{darkgreen}{rgb}{0,.6,0}

\setlength{\parindent}{0cm}
\parskip=2mm
\definecolor{MyDarkBlue}{rgb}{0,0.29,0.7}
\hypersetup{
    colorlinks=true,       
    linkcolor=MyDarkBlue,          
    linkbordercolor=MyDarkBlue,
    citecolor=MyDarkBlue,
    citebordercolor=MyDarkBlue,        
    filecolor=MyDarkBlue,      
    urlcolor=MyDarkBlue           
}

\theoremstyle{plain}

\newtheorem{theorem}{Theorem}[section]
\newtheorem*{theorem*}{Theorem}

\newtheorem{lemma}[theorem]{Lemma}
\newtheorem{prop}[theorem]{Proposition}

\theoremstyle{definition}

\newtheorem{definition}[theorem]{Definition}
\newtheorem{example}[theorem]{Example}

\tikzset{
  on each segment/.style={
    decorate,
    decoration={
      show path construction,
      moveto code={},
      lineto code={
        \path [#1]
        (\tikzinputsegmentfirst) -- (\tikzinputsegmentlast);
      },
      curveto code={
        \path [#1] (\tikzinputsegmentfirst)
        .. controls
        (\tikzinputsegmentsupporta) and (\tikzinputsegmentsupportb)
        ..
        (\tikzinputsegmentlast);
      },
      closepath code={
        \path [#1]
        (\tikzinputsegmentfirst) -- (\tikzinputsegmentlast);
      },
    },
  },
  mid arrow/.style={postaction={decorate,decoration={
        markings,
        mark=at position .5 with {\arrow[#1]{>}}
      }}},
}


\begin{document}

\title{Non-integer valued winding numbers and a generalized Residue Theorem}
\author[1]{Norbert Hungerb\"uhler}
\author[2]{Micha Wasem}
\affil[1]{Department of Mathematics, ETH Z\"urich, R\"amistrasse 101, 8092 Z\"urich, Switzerland}
\affil[2]{HTA/HSW Freiburg, HES-SO University of Applied Sciences and Arts Western Switzerland, P\'erolles 80/Chemin du Mus\'ee 4, 1700 Freiburg, Switzerland}
\date{\today}
\maketitle
\begin{abstract}
\noindent We define a generalization of the winding number of a piecewise $C^1$ cycle in the complex plane
which has a geometric meaning also for points which lie {\em on\/} the cycle. 
The computation of this winding number relies on the Cauchy principal
value, but is also possible in a real version via an integral with
bounded integrand.
The new winding number allows to establish a generalized
residue theorem which covers also the situation where
singularities lie on the cycle. This residue theorem can be used 
to calculate the value of  improper integrals for which the standard
technique with the classical residue theorem does not
apply. 
\end{abstract}
\hspace{5ex}{\small{\it Key words\/}: Cauchy principal value, winding number, residue theorem}

\hspace{5ex}{\small{\it 2010 Mathematics Subject 
Classification\/}: {\bf 30E20}
\normalsize
\section{Introduction}
One of the most prominent tools in complex analysis is Cauchy's Residue Theorem.
To state the classical version of this theorem (see, e.g.,~\cite{ahlfors} or \cite[Theorem 1, p. 75]{narasimhan})
we briefly recall the following notions: A {\em chain\/} is a finite formal
linear combination
$$
\Gamma=\sum_{\ell=1}^km_\ell \gamma_\ell,\quad m_\ell\in\mathbb Z,
$$
of continuous curves $\gamma_\ell:[0,1]\to\mathbb C$. 
A {\em cycle\/} $C$ is a chain, where every point $a\in\mathbb C$ is, 
counted with the corresponding multiplicity $m_\ell$,
as often a starting point of a curve $\gamma_\ell$ as it is an endpoint.
A cycle $C$ is {\em null-homologous\/} in $D\subset\mathbb C$, if its 
winding number for all points in $\mathbb C\setminus D$ vanishes.
Equivalently, $C$ is null-homologous in $D$, if it can be written as 
a linear combination of closed curves wich are contractible in $D$.
Then the residue theorem can be expressed as follows:
\begin{theorem*}[Classical Residue Theorem] Let $U\subset\mathbb C$ be an open set
and $S\subset U$ be a set without  accumulation points in $U$ such that
$f:U\setminus S\to \mathbb C$ is holomorphic. Furthermore, let 
$C$ be a null-homologous  cycle in $U\setminus S$. Then there holds
\begin{equation}\label{classical-residue-theorem}
\frac1{2\pi \mathrm i}\oint_C f(z)\,\mathrm dz = \sum_{s\in S}n_s(C)\operatorname{res}_s f(z),
\end{equation}
where $n_s(C)$ denotes the winding number of $C$ with respect to $s$.
\end{theorem*}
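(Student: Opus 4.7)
The plan is to reduce the statement to Cauchy's integral theorem by isolating the singularities with small circles. First I would show that only finitely many singularities contribute to the sum on the right-hand side of \eqref{classical-residue-theorem}. The support $|C|$ is compact, so the winding number $n_s(C)$ vanishes for all $s$ outside some large disc $D_R$. The set $S\cap D_R$ has no accumulation point in $U$, but a priori it could accumulate on $\partial U$; however points of $S$ close to $\partial U$ lie in the unbounded component of $\mathbb C\setminus |C|$, where $n_s(C)=0$. Hence $\{s\in S : n_s(C)\neq 0\}$ is finite; call these points $s_1,\ldots,s_N$.

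Next, for each $s_j$ I would choose $\varepsilon_j>0$ so small that the closed disc $\overline{B(s_j,\varepsilon_j)}$ is contained in $U$, is disjoint from $|C|$, and contains no other point of $S$. Let $\gamma_j$ be $\partial B(s_j,\varepsilon_j)$ traversed counterclockwise, and form the modified cycle
\[
C' \;=\; C \;-\; \sum_{j=1}^N n_{s_j}(C)\,\gamma_j.
\]
I would then verify that $C'$ is null-homologous in $U\setminus S$: for any $z\in\mathbb C\setminus U$ the winding number of $C$ vanishes by hypothesis and each $\gamma_j$ bounds a disc in $U$ not containing $z$, so $n_z(\gamma_j)=0$; for any $s\in S$, if $s\notin\{s_1,\ldots,s_N\}$ then $n_s(C)=0$ by construction and $n_s(\gamma_j)=0$ since $s$ lies outside the chosen discs, while for $s=s_k$ one gets $n_{s_k}(C')=n_{s_k}(C)-n_{s_k}(C)\cdot 1=0$.

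With $C'$ null-homologous in $U\setminus S$ and $f$ holomorphic there, the homology version of Cauchy's integral theorem yields $\oint_{C'} f\,\mathrm dz=0$, which rearranges to
\[
\oint_C f(z)\,\mathrm dz \;=\; \sum_{j=1}^N n_{s_j}(C)\oint_{\gamma_j} f(z)\,\mathrm dz
\;=\; 2\pi\mathrm i\sum_{j=1}^N n_{s_j}(C)\operatorname{res}_{s_j} f,
\]
using the definition of the residue via its Laurent expansion on the punctured disc around $s_j$. Extending the sum to all of $S$ is harmless since the additional terms carry the factor $n_s(C)=0$.

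The main obstacle is the homological bookkeeping in the second step, especially ensuring that one can choose the auxiliary discs pairwise disjoint, disjoint from $|C|$, and inside $U$ simultaneously, and that the winding number identities on $\mathbb C\setminus(U\setminus S)$ genuinely hold. The analytic content (existence of the residue and the value of the circle integral) is standard once the topology is properly set up.
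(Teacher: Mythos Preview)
The paper does not prove this statement: the Classical Residue Theorem is quoted in the introduction as background, with references to Ahlfors and Narasimhan, and is then \emph{used} (not proved) in the proof of the generalized Theorem~\ref{thm-residue}. So there is no ``paper's own proof'' to compare against.

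Your outline is the standard proof and is essentially correct, but one step is mis-argued. In showing that only finitely many $s\in S$ have $n_s(C)\neq 0$, you claim that points of $S$ close to $\partial U$ lie in the unbounded component of $\mathbb C\setminus|C|$. That is false in general: if $U$ is an annulus, points near the inner boundary circle may well be surrounded by $C$. The correct reason is the null-homologous hypothesis itself. Since $C$ is null-homologous in $U$, every $z\in\mathbb C\setminus U$ has $n_z(C)=0$; as $|C|$ is a compact subset of $U$, any boundary point $z_0\in\partial U$ has a neighbourhood disjoint from $|C|$, and local constancy of the winding number forces $n_z(C)=0$ there. Equivalently, the open bounded set $V=\{z\notin|C|:n_z(C)\neq 0\}$ satisfies $\overline V\subset V\cup|C|\subset U$, so $\overline V$ is compact in $U$ and $S\cap\overline V$ is finite. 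With this correction, the rest of your argument (constructing $C'=C-\sum n_{s_j}(C)\gamma_j$, checking it is null-homologous in $U\setminus S$, and applying the homology form of Cauchy's theorem) goes through.
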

Henrici considers in~\cite[Theorem 4.8f]{henrici} a version of the residue theorem where $C$ is
the boundary of a semicircle in the upper half-plane with diameter $[-R,R]$, and
where $f$ is allowed to have poles on $(-R,R)$ which involve odd powers only. The result is basically 
a version of the classical formula~(\ref{classical-residue-theorem}), but with
winding number $\frac12$ for the singularities on the real axis, and
where the integral on the left hand side of~(\ref{classical-residue-theorem}) is
interpreted as a Cauchy Principal Value.
Another very recent version of the residue theorem, where poles of order 1 on the piecewise $C^1$ boundary curve $\gamma$ of
an open set are allowed, is discussed in~\cite[Theorem 1]{legua}. There, if
a pole is sitting on a corner of $\gamma$, the winding number is replaced by
the angle formed by $\gamma$ in this point, divided by $2\pi$.
In~\cite{lu} a version of the residue theorem for functions $f$ with finitely many poles is presented where singularities in points $z_0$ on a closed curve $\Gamma$ are allowed provided $|f(z)|=O(|z-z_0|^\mu)$ near $z_0$, with $-1<\mu\le 0$.
Further versions of generalizations of the residue theorem are described 
in~\cite{zhong2} and~\cite{zhong1}: there, versions for unbounded multiply connected regions of the second class are applied to higher-order singular integrals
and transcendental singular integrals.

In the present article, we introduce a generalized, non-integer winding number 
for  piecewise $C^1$ cycles $C$, and a general version of the residue theorem
which covers all cases of singularities on $C$. We will assume throughout the article that all curves are continuous. In particular, a piecewise $C^1$ curve is a continuous curve which is piecewise $C^1$. Recall that a closed piecewise $C^1$ immersion $\Lambda:[a,b]\to \C$ is a closed curve such that there is a partition $a=a_0<a_1<\ldots<a_n=b$ such that $\Lambda|_{[a_k,a_{k+1}]}$ is of class $C^1$ and such that
$\dot\Lambda|_{[a_k,a_{k+1}]}\ne 0$ for all $k=0,\ldots, n-1$. If $\dot\Lambda|_{[a_k,a_{k+1}]}$ is furthermore a Lipschitz function for all $k=0,\ldots, n-1$, then $\Lambda$ is called a closed piecewise $C^{1,1}$ immersion.

\section{A Generalized Winding Number}\label{generalizedwindingnumber}
The aim of this section is to generalize the winding number
to  piecewise $C^1$ cycles with respect to points sitting on the cycle itself.

The usual standard situation is the following: The winding number of a closed 
piecewise $C^1$ curve $\gamma:[a,b]\to \C\setminus\{0\}$ around $z=0$ is given by
$$
n_0(\gamma)=\frac{1}{2\pi\mathrm i}\oint_\gamma \frac{\mathrm dz}{z} \in\mathbb Z,
$$
see for example \cite[p. 70]{narasimhan} or \cite[p. 75]{baker}. More generally, one can 
replace the curve $\gamma$ by a  piecewise $C^1$ cycle $C
=\sum_{\ell=1}^k m_\ell\gamma_\ell$
. An integral over the cycle is then
$$\oint_C f(z)\,\mathrm dz:=\sum_{\ell=1}^km_\ell\oint_{\gamma_\ell} f(z)\,\mathrm dz.$$

In order to make sense of the winding number also for points {\em on\/} the curve, we 
use the Cauchy principal value:
\begin{definition}\label{def-winding}
The winding number of a  
piecewise $C^1$ cycle $C:[a,b]\to \C$ with respect to $z_0\in C$ is 
\begin{equation}\label{windungszahl}
n_{z_0}(C):=\operatorname{PV}\frac{1}{2\pi\mathrm i}\oint_C \frac{\mathrm dz}{z-z_0}=
\lim_{\varepsilon\searrow 0}\frac{1}{2\pi\mathrm i}
\int\limits_{|C-z_0|>\varepsilon} \frac{\mathrm dz}{z-z_0}\,.
\end{equation}
\end{definition}
It is not a priori clear whether this limit exists and what its geometric meaning is.
So, we start by looking at the following model case: Using the Cauchy principal value we can easily compute the winding number with respect to $z=0$ 
of the {\em model sector-curve}  $\gamma = \gamma_1 + \gamma_2 + \gamma_3$, where
$$\begin{aligned}
\gamma_1: [0,r]\to\C, &~~t\mapsto t,\ \ \gamma_1'(t) = 1\\
\gamma_2:[0,\alpha]\to\C, &~~t\mapsto r\mathrm e^{\mathrm it},\ \ \gamma_2'(t) = r\mathrm i \mathrm e^{\mathrm it}\\
\gamma_3:[0,r]\to\C, &~~t\mapsto (r-t)\mathrm e^{\mathrm i\alpha},\ \ \gamma_3'(t) = -\mathrm e^{\mathrm i\alpha}
\end{aligned}$$
for $\alpha\in[0,2\pi]$ (see Figure~\ref{fig-gamma}).
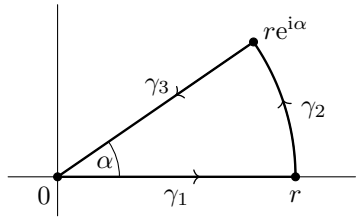
\begin{figure}[h]
\begin{center}
\begin{tikzpicture}[line cap=round,line join=round,x=18,y=18]
\draw[color=black] (-1.04,0.) -- (6.26,0.);
\draw[color=black] (0.,-0.82) -- (0.,3.62);
\clip(-1.04,-0.82) rectangle (6.26,3.62);
\draw [shift={(0.,0.)},line width=0.4pt] (0,0) -- (0.:1.3) arc (0.:34.55811897572532:1.3) -- cycle;
\draw [line width=.9pt] (0.,0.)-- (5.,0.);

\draw [->,line width=.5pt] (0.,0.)-- (3,0.);

\draw [shift={(0.,0.)},line width=.9pt]  plot[domain=0.:0.6031529594223372,variable=\t]({1.*5.*cos(\t r)+0.*5.*sin(\t r)},{0.*5.*cos(\t r)+1.*5.*sin(\t r)});

\draw [->,shift={(0.,0.)},line width=.5pt]  plot[domain=0.:0.6031529594223372*.55,variable=\t]({1.*5.*cos(\t r)+0.*5.*sin(\t r)},{0.*5.*cos(\t r)+1.*5.*sin(\t r)});

\draw [line width=.9pt] (4.117756103310844,2.836209560953897)-- (0.,0.);
\draw [->,line width=.5pt] (4.117756103310844,2.836209560953897)-- (3*4.117756103310844/5,3*2.836209560953897/5);

\begin{small}
\draw [fill=black] (0.,0.) circle (1.5pt);
\draw[color=black] (-0.28,-0.4) node {$0$};
\draw [fill=black] (5.,0.) circle (1.5pt);
\draw[color=black] (5.,-0.4) node {$r$};
\draw[color=black] (2.5,-0.44) node {$\gamma_1$};
\draw [fill=black] (4.117756103310844,2.836209560953897) circle (1.5pt);
\draw[color=black] (4.8,3.13) node {$r\mathrm e^{\mathrm i\alpha}$};
\draw[color=black] (5.39,1.4) node {$\gamma_2$};
\draw[color=black] (2.1,1.9) node {$\gamma_3$};
\draw[color=black] (1.,0.3) node {$\alpha$};
\end{small}
\end{tikzpicture}
\caption{The model sector-curve $\gamma=\gamma_1+\gamma_2+\gamma_3$.}\label{fig-gamma}
\end{center}
\end{figure}
Since
$$\begin{aligned}
\operatorname{PV}\oint_\gamma \frac{\mathrm dz}{z} & = \lim_{\varepsilon\searrow 0}\left(\int_\varepsilon^r \frac{\mathrm dt}{t}+\int_0^\alpha\frac{r\mathrm i \mathrm e^{\mathrm it}}{r \mathrm e^{\mathrm it}}\,\mathrm dt + \int_0^{r-\varepsilon}\frac{-\mathrm e^{\mathrm i\alpha}}{(r-t)\mathrm e^{\mathrm i\alpha}}\,\mathrm dt\right)=\\
& = \lim_{\varepsilon\searrow 0}(\ln r-\ln \varepsilon+\mathrm i\alpha+\ln\varepsilon - \ln r) = \mathrm i\alpha,
\end{aligned}$$
we obtain
$$
n_0(\gamma)=\operatorname{PV}\frac{1}{2\pi\mathrm i}\oint_\gamma \frac{\mathrm dz}{z}=\frac{\alpha}{2\pi}
$$
with a meaningful geometrical interpretation.
  
Consider now a closed piecewise $C^1$ immersion $\Gamma:[a,b]\to\C$ starting and ending in $0$ but
such that $\Gamma(t)\ne 0$ for all $t\ne a,b$ 
and such that the (positively oriented) angle between $\lim_{t\searrow a}\dot\Gamma(t)$ and $-\lim_{t\nearrow b}\dot\Gamma(t)$ equals $\alpha\in[0,2\pi]$. 
By a suitable rotation we may assume, without loss of generality, that  $\lim_{t\searrow a}\dot\Gamma(t)$ is a positive real number (see Figure~\ref{fig-re}).
\begin{figure}[h]
\begin{center}
\begin{tikzpicture}[x=16,y=16]
\clip(-.8,-0.65) rectangle (7,4.5);
\draw (-.8,0) -- (7,0);
\draw (0,-.6) -- (0,5);
\draw[line width=.9pt,smooth,domain=-0.523599:0.523599] plot ({6*cos(3*\x r)*(0.777714*cos(\x r) - 0.880066*sin(\x r))},{6*cos(3*\x r)*(0.628619*cos(\x r) + 1.0888*sin(\x r))});
\draw[->,line width=.5pt,smooth,domain=-.01:.01] plot ({6*cos(3*\x r)*(0.777714*cos(\x r) - 0.880066*sin(\x r))},{6*cos(3*\x r)*(0.628619*cos(\x r) + 1.0888*sin(\x r))});
\draw[color=black] (5.4,3) node {\small$\Gamma$};

\draw[color=black] (.5,0.4) node {\small$\alpha$};
\draw [fill=black] (0.,0.) circle (1.5pt);
\draw[color=black] (-0.28,-0.4) node {$0$};

\draw [line width=0.4pt] (0.997564, 0.0697565) arc (4:76:1) ;


\end{tikzpicture}
\caption{Winding number with respect to the origin sitting on the curve $\Gamma$.}\label{fig-re}
\end{center}
\end{figure}
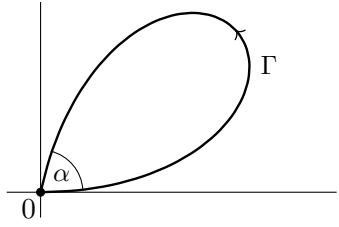
We assume that $\Gamma$ is homotopic to a model sector-curve $\gamma$ with the same angle $\alpha$
in the following sense: There is a continuous function $H:[a,b]\times [0,1]\to \mathbb C$ such that
\begin{equation}\label{eq-hom} 
\arraycolsep2pt
\begin{array}{rll} 
H(t,0) & = \Gamma(t) &\text{for all $t\in[a,b]$,}\\
H(t,1) & = \gamma(t) &\text{for all $t\in[a,b]$,}\\
0=H(a,s) &= H(b,s)\quad   &\text{for all $s\in[0,1]$,}\\
H(t,s)& \neq 0 &\text{for all $t\in(a,b)$ and all $s\in[0,1]$}.
\end{array}
\end{equation}
Then we claim that
\begin{equation}\label{eq-claim}
\lim_{\varepsilon\searrow 0}\int_{|\Gamma|>\varepsilon}\frac{\mathrm dz}{z}=\lim_{\varepsilon\searrow 0}\int_{|\gamma|>\varepsilon}\frac{\mathrm dz}{z}\,.
\end{equation}

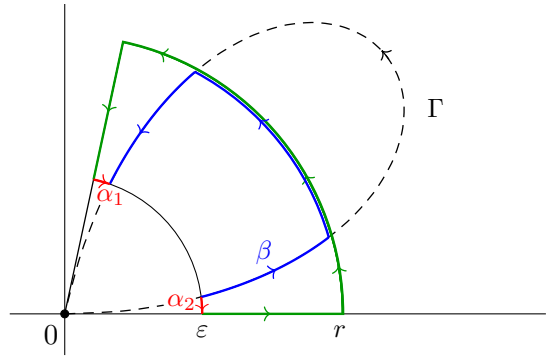
\begin{figure}[h]
\begin{center}
\begin{tikzpicture}[x=26,y=26]
\clip(-.8,-0.6) rectangle (7,4.5);

\draw (-.8,0) -- (7,0)  node[anchor=west] {};
\draw (0,-.6) -- (0,5)  node[anchor=east] {};
\draw[line width=.5pt,dash pattern=on 4pt off 3pt,smooth,domain=-0.523599:0.523599] plot ({6*cos(3*\x r)*(0.777714*cos(\x r) - 0.880066*sin(\x r))},{6*cos(3*\x r)*(0.628619*cos(\x r) + 1.0888*sin(\x r))});
\draw[->,line width=.5pt,smooth,domain=-.01:.01] plot ({6*cos(3*\x r)*(0.777714*cos(\x r) - 0.880066*sin(\x r))},{6*cos(3*\x r)*(0.628619*cos(\x r) + 1.0888*sin(\x r))});
\draw[color=black] (5.4,3) node {\small$\Gamma$};
\draw[color=black] (2,-.26) node {\small$\varepsilon$};
\draw[color=black] (4,-.26) node {\small$r$};
\draw[color=blue] (2.9,.9) node {\small$\beta$};

\draw [fill=black] (0.,0.) circle (1.5pt);
\draw[color=black] (-0.2,-0.3) node {$0$};

\draw [color=black,shift={(0.,0.)},line width=0.5pt] (0.:4.05) arc (0:77.8966:4.05) to (0,0);

\draw [color=darkgreen,shift={(0.,0.)},line width=0.9pt] (2,0) to (0.:4.05) arc (0:77.8966:4.05) to (0.419355, 1.95554);
\draw [->,color=darkgreen,shift={(0.,0.)},line width=0.5pt] (2,0) to (0.:3) ;
\draw [->,color=darkgreen,shift={(0.,0.)},line width=0.5pt] (2,0) to (0.:4.04) arc (0:10:4.04);
\draw [->,color=darkgreen,shift={(0.,0.)},line width=0.5pt] (2,0) to (0.:4.04) arc (0:30:4.04);
\draw [->,color=darkgreen,shift={(0.,0.)},line width=0.5pt] (2,0) to (0.:4.04) arc (0:70:4.04);
\draw [->,color=darkgreen,shift={(0.,0.)},line width=0.5pt] (2,0) to (0.:4.04) arc (0:77.8966:4.04) to (0.629032, 2.93331);

\draw [shift={(0.,0.)},line width=0.4pt] (0.:2) arc (0:77.8966:2) ;
\draw [line width=0.9pt,color=blue] (3.84168, 1.11423) arc (16.1741:61.7224:4) ;
\draw [->,line width=0.5pt,color=blue] (3.84168, 1.11423) arc (16.1741:45:4) ;

\draw[color=blue,line width=.9pt,smooth,domain=-0.418649:-0.291346] plot ({6*cos(3*\x r)*(0.777714*cos(\x r) - 0.880066*sin(\x r))},{6*cos(3*\x r)*(0.628619*cos(\x r) + 1.0888*sin(\x r))});
\draw[color=blue,line width=.9pt,smooth,domain=0.291346:0.418649] plot ({6*cos(3*\x r)*(0.777714*cos(\x r) - 0.880066*sin(\x r))},{6*cos(3*\x r)*(0.628619*cos(\x r) + 1.0888*sin(\x r))});

\draw[->,color=blue,line width=.5pt,smooth,domain=-0.418649:-0.35] plot ({6*cos(3*\x r)*(0.777714*cos(\x r) - 0.880066*sin(\x r))},{6*cos(3*\x r)*(0.628619*cos(\x r) + 1.0888*sin(\x r))});
\draw[->,color=blue,line width=.5pt,smooth,domain=0.291346:.37] plot ({6*cos(3*\x r)*(0.777714*cos(\x r) - 0.880066*sin(\x r))},{6*cos(3*\x r)*(0.628619*cos(\x r) + 1.0888*sin(\x r))});

\draw [color=red,line width=0.9pt] (0:2) arc (0:7.02805:2) ;
\draw [-<,color=red,line width=0.4pt] (0:2) arc (0:4:2) ;

\draw [color=red,line width=0.9pt] (0.419355, 1.95554) arc (77.8966:70.8685:2) ;
\draw [->,color=red,line width=0.5pt] (0.419355, 1.95554) arc (77.8966:72:2) ;
\draw [color=white,fill=white ](.65,1.7) circle (.13);
\draw[color=red] (.67,1.7) node[anchor=center] {\small$\alpha_1$};

\draw [color=white,fill=white ](1.58,.17) circle (.14);
\draw [color=white,fill=white ](1.7,.22) circle (.1);
\draw [color=white,fill=white ](1.8,.22) circle (.1);

\draw[color=red] (1.7,.17) node[anchor=center]  {\small$\alpha_2$};


\end{tikzpicture}
\caption{The curves $\Gamma$ and $\gamma$.}\label{fig-cauchy}
\end{center}
\end{figure}

For $0<\varepsilon<r$ small enough we have (see Figure~\ref{fig-cauchy} for the definition of $\beta$)
$$
\int_{|\Gamma|>\varepsilon}\frac{\mathrm dz}{z}=\int_{\beta}\frac{\mathrm dz}{z}
$$
by Cauchy's integral theorem, and hence
$$\begin{aligned}
\int_{|\Gamma|>\varepsilon}\frac{\mathrm dz}{z}-\int_{|\gamma|>\varepsilon}\frac{\mathrm dz}{z} & = \int_{\beta}\frac{\mathrm dz}{z}-\int_{|\gamma|>\varepsilon}\frac{\mathrm dz}{z}=\\
& = \int_{\beta}\frac{\mathrm dz}{z}-\int_{|\gamma|>\varepsilon}\frac{\mathrm dz}{z}+\int_{(|\gamma|>\varepsilon) + \alpha_1-\beta+\alpha_2}\frac{\mathrm dz}{z}=\\
& = \int_{\alpha_1+\alpha_2}\frac{\mathrm dz}{z}\,.
\end{aligned}$$
Since
$$
\left|\int_{\alpha_1+\alpha_2}\frac{\mathrm dz}{z}\right| \leq \frac{1}{\varepsilon}\underbrace{\operatorname{Length}(\alpha_1+\alpha_2)}_{=o(\varepsilon)}
\longrightarrow 0 \text{ for $\varepsilon\searrow 0$},
$$
the claim~(\ref{eq-claim}) follows. Thus we get the geometrically reasonable result that
the winding number of the curve $\Gamma$ with respect to $z=0$ is, as we have just seen,
the angle $\alpha$ divided by $2\pi$:
\begin{equation}\label{eq-med}
n_0(\Gamma)=\operatorname{PV}\frac{1}{2\pi\mathrm i}\oint_\Gamma \frac{\mathrm dz}{z}=\frac{\alpha}{2\pi}\,.
\end{equation}
\begin{figure}[h]
\begin{center}
\begin{tikzpicture}[x=26,y=26,use Hobby shortcut,my marks/.style={},
    arrow mark/.style={
      /my marks/.append style={
        mark=at position #1 with {\arrow[line width=.5pt]{>}},
      },
    },
       do marks/.style={
      decorate,
      postaction={
        decoration={
          markings,
          /my marks,
        },
        decorate,
      },
    }]
\clip(-7.5,-4.1) rectangle (7.3,5.5);
\draw [color=black,line width=0.5pt] (0.321242, 0.383153) arc (47.2943:360-34.5:.5) ;

\draw (-8,0) -- (7.5,0)  node[anchor=west] {};
\draw (0,-4) -- (0,5.8)  node[anchor=east] {};
\draw[color=blue,->,line width=.5pt,smooth,domain=-.4:-.42] plot ({6*cos(3*\x r)*cos(\x r)},{8*cos(3*\x r)*sin(\x r)});
\draw[color=blue,line width=.9pt,smooth,domain=-0.523599:-.3] plot ({6*cos(3*\x r)*cos(\x r)},{8*cos(3*\x r)*sin(\x r)});
\draw[color=red,line width=.9pt,smooth,domain=-0.523599:-0.447] plot ({-.055+6*cos(3*\x r)*cos(\x r)},{8*cos(3*\x r)*sin(\x r)});
\draw[->,color=red,line width=.5pt,smooth,domain=-.47:-0.49] plot ({-.055+6*cos(3*\x r)*cos(\x r)},{8*cos(3*\x r)*sin(\x r)});
\draw[color=blue,->,line width=.5pt,smooth,domain=-0.523599:-.33] plot ({6*cos(3*\x r)*cos(\x r)},{-13*cos(3*\x r)*sin(\x r)});
\draw[color=blue,line width=.9pt,smooth,domain=-0.523599:-.2] plot ({6*cos(3*\x r)*cos(\x r)},{-13*cos(3*\x r)*sin(\x r)});
\draw[color=red,line width=.9pt,smooth,domain=-0.523599:-0.462] plot ({-.049+6*cos(3*\x r)*cos(\x r)},{-13*cos(3*\x r)*sin(\x r)});
\draw[->,color=red,line width=.5pt,smooth,domain=-0.523599:-0.49] plot ({-.049+6*cos(3*\x r)*cos(\x r)},{-13*cos(3*\x r)*sin(\x r)});

\draw[color=blue,line width=.9pt,arrow mark=.1,arrow mark=.3,arrow mark=.6,arrow mark=.8,do marks] (4.8533, 2.1316) ..(-1,5) .. (-5,1) .. (-3,-2) .. (-1,0).. (-2.5,1.5) ;
\draw[color=blue,line width=.9pt,arrow mark=.1,arrow mark=.3,arrow mark=.6,arrow mark=.8,do marks]  (-2.5,1.5) ..(-6,-1.3) .. (-7.2,0) .. (-6,1.2) .. (0,-3.5) .. (1.5,-2.5) ;
\draw[color=blue,line width=.9pt,arrow mark=.1,arrow mark=.3,arrow mark=.6,arrow mark=.8,do marks]  (1.5,-2.5) ..(4,-4) .. (7,0) .. (4,5) .. (2,2) ..(3.56308, -1.46959) ;

\draw [color=darkgreen,line width=0.9pt] (0.959167, 1.03923) arc (47.2943:360-32.7599:1.414) ;
\draw [->,color=darkgreen,line width=0.5pt] (0.959167, 1.03923) arc (47.2943:120:1.414) ;

\draw[color=blue] (6.6,4) node[anchor=center]  {\small$\Lambda$};
\draw[color=red] (.75,.4) node[anchor=center]  {\small$\Gamma_1$};
\draw[color=darkgreen] (-.5,1.6) node[anchor=center]  {\small$\Gamma_2$};
\draw[color=red] (.88,-.3) node[anchor=center]  {\small$\Gamma_3$};
\draw[color=black] (-.2,.2) node[anchor=center]  {\small$\alpha$};

\end{tikzpicture}
\caption{Winding number of $\Lambda$.}\label{fig-ne}
\end{center}
\end{figure}Next, we consider an closed piecewise $C^1$ immersion $\Lambda:[a,b]\to \C$ with one zero $x_1$,
and a positively oriented angle $\alpha\in[0,2\pi]$ between $\lim_{t\searrow x_1}\dot\Lambda(t)$ and $-\lim_{t\nearrow x_1}\dot\Lambda(t)$. 
Let  $\Gamma=\Gamma_1+\Gamma_2+\Gamma_3$ be a closed piecewise $C^1$ curve which coincides
with $\Lambda$ in a neighborhood of $x_1$ and which is homotopic in the sense of~(\ref{eq-hom}) to a model sector-curve
with the same angle $\alpha$ (see Figure~\ref{fig-ne}). Then, we decompose $\Lambda$
by $\Lambda=\tilde\Lambda+\Gamma$. By~(\ref{eq-med}) we get
\begin{equation}\label{eq-ap}
n_0(\Lambda)=\operatorname{PV}\frac{1}{2\pi\mathrm i}\oint_\Lambda \frac{\mathrm dz}{z}=
n_0(\tilde\Lambda)+\frac{\alpha}{2\pi}\,.
\end{equation}
%
%
%
Finally, if $\Lambda$ has more than one zero, we obtain in the same way the following proposition:
\begin{prop}\label{prop-winding} 
Let $\Lambda:[a,b]\to \mathbb C$ be a closed piecewise $C^1$ immersion 
and $z_0\in\mathbb C$. Then there exist at most finitely many 
points $x_1,\ldots,x_n\in[a,b]$ such that $\Lambda(x_\ell)=z_0$. Consider a decomposition
$\Lambda=\tilde\Lambda+\Gamma_1+\ldots+\Gamma_n$, where $\tilde\Lambda$ 
coincides with $\Lambda$ outside of small neighborhoods of the points $x_\ell$
and avoids the point $z_0$ by driving around it on small circular arcs in clockwise direction.
The closed curves $\Gamma_\ell$ are homotopic in the sense of~(\ref{eq-hom}) to a model sector-curve
with oriented angle $\alpha_\ell$ between $\lim_{t\searrow x_\ell}\dot\Lambda(t)$ and $-\lim_{t\nearrow x_\ell}\dot\Lambda(t)$
(see Figure~\ref{fig-la}). Then, the winding number of $\Lambda$ with respect to $z_0$ is
$$
n_{z_0}(\Lambda)=\operatorname{PV}\frac{1}{2\pi\mathrm i}\oint_\Lambda \frac{\mathrm dz}{z-z_0}=
n_{z_0}(\tilde\Lambda)+\sum_{\ell=1}^n\frac{\alpha_\ell}{2\pi}\,.
$$
\end{prop}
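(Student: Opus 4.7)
The plan is to translate so that $z_0=0$ and then iterate the single-zero decomposition that led to equation~(\ref{eq-ap}), applying it once per preimage of $0$ under $\Lambda$. Before doing so, I need to show that there are only finitely many such preimages. Since $\Lambda$ is a piecewise $C^1$ immersion, there is a partition $a=a_0<a_1<\cdots<a_m=b$ such that each restriction $\Lambda|_{[a_k,a_{k+1}]}$ is $C^1$ with $\dot\Lambda\neq 0$ and therefore locally injective; hence $\Lambda^{-1}(0)\cap[a_k,a_{k+1}]$ is compact and discrete, so finite. Taking the union over $k$ gives finiteness of the full preimage $\{x_1,\ldots,x_n\}$.

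For the decomposition I would choose pairwise disjoint parameter intervals $(x_\ell-\delta_\ell,x_\ell+\delta_\ell)$ on which $\Lambda$ vanishes only at $x_\ell$, let $\Gamma_\ell$ be the closed curve obtained by following $\Lambda$ on that interval and closing the loop with a short counter-clockwise circular arc around $0$, and let $\tilde\Lambda$ be the complementary piece of $\Lambda$ with the reverse (clockwise) arcs inserted in place of the excised portions. This yields the cycle identity $\Lambda=\tilde\Lambda+\Gamma_1+\cdots+\Gamma_n$, since each inserted arc appears once with each orientation. For $\varepsilon$ smaller than the $\delta_\ell$ and the radii of the closing arcs, the truncation $\{t:|\Lambda(t)|>\varepsilon\}$ splits as the (essentially disjoint) union of $\tilde\Lambda$ and the truncations $\{t:|\Gamma_\ell(t)|>\varepsilon\}$, so that
\[
\int_{|\Lambda|>\varepsilon}\frac{\mathrm dz}{z}\;=\;\int_{\tilde\Lambda}\frac{\mathrm dz}{z}+\sum_{\ell=1}^{n}\int_{|\Gamma_\ell|>\varepsilon}\frac{\mathrm dz}{z}
\]
up to contributions of auxiliary arcs that cancel in pairs. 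Letting $\varepsilon\searrow 0$: the first term is an ordinary line integral since $\tilde\Lambda$ avoids $0$, while each term in the sum converges by~(\ref{eq-med}) to $\mathrm i\alpha_\ell$. Dividing by $2\pi\mathrm i$ yields the claimed formula.

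The main technical obstacle is the geometric bookkeeping of the auxiliary arcs: one must verify that the small circular arcs used to close each $\Gamma_\ell$ and the corresponding detours inserted into $\tilde\Lambda$ can be chosen to coincide (with opposite orientations) on a sufficiently small scale so that they cancel in the cycle sum, and that the $\varepsilon$-truncation splits exactly as indicated. Once this is done, the homotopy hypothesis~(\ref{eq-hom}) makes~(\ref{eq-med}) applicable to each $\Gamma_\ell$, and the conclusion reduces to the single-zero case already established.
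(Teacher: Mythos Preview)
Your proposal is correct and follows essentially the same route as the paper: split the principal-value integral along the cycle decomposition $\Lambda=\tilde\Lambda+\sum_\ell\Gamma_\ell$, observe that $\tilde\Lambda$ avoids $z_0$ so no principal value is needed there, and invoke~(\ref{eq-med}) on each $\Gamma_\ell$. The only notable difference is in the finiteness argument for the preimage: you use local injectivity of a $C^1$ immersion (discrete $+$ compact $\Rightarrow$ finite), whereas the paper argues by Rolle's theorem on the coordinate functions of an arc-length parametrization; your version is slightly cleaner but both are standard.
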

\begin{figure}[h]
\begin{center}
\begin{tikzpicture}[x=16,y=16]
\clip(-6,-1.7) rectangle (6,3.6);

\draw[color=blue,line width=.9pt,smooth,domain=0.523599:2.61799] plot ({6*cos(3*\x r)*sin(\x r)},{-6*cos(3*\x r)*cos(\x r)});
\draw[-<,color=blue,line width=.5pt,smooth,domain=0.6:.77] plot ({6*cos(3*\x r)*sin(\x r)},{-6*cos(3*\x r)*cos(\x r)});
\draw[-<,color=blue,line width=.5pt,smooth,domain=1.1:1.35] plot ({6*cos(3*\x r)*sin(\x r)},{-6*cos(3*\x r)*cos(\x r)});
\draw[-<,color=blue,line width=.5pt,smooth,domain=1:1.8] plot ({6*cos(3*\x r)*sin(\x r)},{-6*cos(3*\x r)*cos(\x r)});
\draw[-<,color=blue,line width=.5pt,smooth,domain=1.7:2.4] plot ({6*cos(3*\x r)*sin(\x r)},{-6*cos(3*\x r)*cos(\x r)});
\draw[color=blue] (5.4,2.) node[anchor=center]  {\small$\Lambda$};

\draw [color=black,fill=black] (0.0,0) circle (.1);
\draw[color=black] (0,-.4) node[anchor=center]  {\small$z_0$};

\end{tikzpicture}
\begin{tikzpicture}[x=16,y=16]
\clip(-6,-1.7) rectangle (6,3.6);


\draw[color=red,line width=.9pt,smooth,domain=0.523599:0.607826 ] plot ({6*cos(3*\x r)*sin(\x r)},{-6*cos(3*\x r)*cos(\x r)});
\draw[-<,color=red,line width=.5pt,smooth,domain=0.523599:0.57 ] plot ({6*cos(3*\x r)*sin(\x r)},{-6*cos(3*\x r)*cos(\x r)});
\draw[color=red,line width=.9pt,smooth,domain=1.48657:1.65502] plot ({6*cos(3*\x r)*sin(\x r)},{-6*cos(3*\x r)*cos(\x r)});
\draw[-<,color=red,line width=.5pt,smooth,domain=1.48657:1.535] plot ({6*cos(3*\x r)*sin(\x r)},{-6*cos(3*\x r)*cos(\x r)});
\draw[-<,color=red,line width=.5pt,smooth,domain=1.48657:1.62] plot ({6*cos(3*\x r)*sin(\x r)},{-6*cos(3*\x r)*cos(\x r)});
\draw[color=red,line width=.9pt,smooth,domain=2.53377:2.61799 ] plot ({6*cos(3*\x r)*sin(\x r)},{-6*cos(3*\x r)*cos(\x r)});
\draw[-<,color=red,line width=.5pt,smooth,domain=2.53377:2.58 ] plot ({6*cos(3*\x r)*sin(\x r)},{-6*cos(3*\x r)*cos(\x r)});
\draw [color=red,line width=0.9pt] (.95*0.856626, .95*1.23134) arc (55.1742:124.826:.95*1.5) ;
\draw [->,color=red,line width=0.5pt] (.95*0.856626, .95*1.23134) arc (55.1742:110:.95*1.5) ;

\draw [color=red,line width=0.9pt] (-1.49468*.95, 0.126191*.95) arc (184.826:180+360-184.826:1.5*.95) ;
\draw [->,color=red,line width=0.5pt] (-1.49468*.95, 0.126191*.95) arc (184.826:290:1.5*.95) ;

\draw[color=darkgreen,line width=.9pt,smooth,domain=0.606:1.4887 ] plot ({6*cos(3*\x r)*sin(\x r)},{-6*cos(3*\x r)*cos(\x r)});
\draw[color=darkgreen,line width=.9pt,smooth,domain=1.6533:2.5358 ] plot ({6*cos(3*\x r)*sin(\x r)},{-6*cos(3*\x r)*cos(\x r)});
\draw[-<,color=darkgreen,line width=.5pt,smooth,domain=0.7:.77] plot ({6*cos(3*\x r)*sin(\x r)},{-6*cos(3*\x r)*cos(\x r)});
\draw[-<,color=darkgreen,line width=.5pt,smooth,domain=1.3:1.35] plot ({6*cos(3*\x r)*sin(\x r)},{-6*cos(3*\x r)*cos(\x r)});
\draw[-<,color=darkgreen,line width=.5pt,smooth,domain=1.77:1.8] plot ({6*cos(3*\x r)*sin(\x r)},{-6*cos(3*\x r)*cos(\x r)});
\draw[-<,color=darkgreen,line width=.5pt,smooth,domain=2.33:2.4] plot ({6*cos(3*\x r)*sin(\x r)},{-6*cos(3*\x r)*cos(\x r)});
\draw[color=darkgreen] (5.45,2.) node[anchor=center]  {\small$\tilde\Lambda$};
\draw [color=darkgreen,line width=0.9pt] (0.856626, 1.23134) arc (55.1742:124.826:1.5) ;
\draw [-<,color=darkgreen,line width=0.5pt] (0.856626, 1.23134) arc (55.1742:80:1.5) ;
\draw [color=darkgreen,line width=0.9pt] (-1.49468, 0.126191) arc (184.826:180+360-184.826:1.5) ;
\draw [-<,color=darkgreen,line width=0.5pt] (-1.49468, 0.126191) arc (184.826:260:1.5) ;

\draw [color=white,fill=white] (0.0,1.05) circle (.3);

\draw[color=red] (.0,1) node[anchor=center]  {\small$\Gamma_1$};
\draw[color=red] (.5,-.5) node[anchor=center]  {\small$\Gamma_2$};
\draw [color=black,fill=black] (0.0,0) circle (.1);
\draw[color=black] (-.3,-.4) node[anchor=center]  {\small$z_0$};

\end{tikzpicture}

\caption{Decomposition of $\Lambda=\tilde\Lambda+\Gamma_1+\Gamma_2$.}\label{fig-la}
\end{center}
\end{figure}
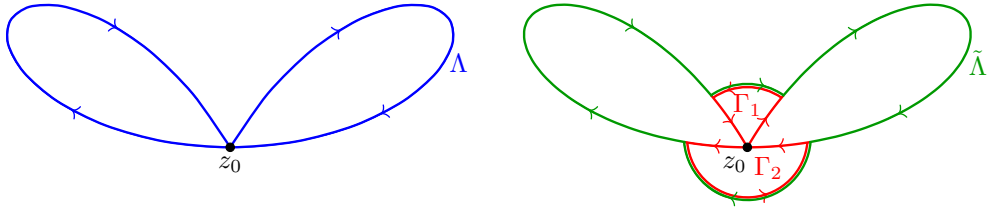
\begin{proof}
First we show that for only finitely many points $x_\ell$, we have $\Lambda(x_\ell)=z_0$.
It suffices to consider a $C^1$ curve $\Lambda:[a,b]\to \mathbb R^2$
parametrized by arc length,
and $z_0=(0,0)$. Assume by contradiction that $\Lambda$ has  infinitely many zeros $x_\ell$. Then 
there is a subsequence, again denoted by $x_\ell$, which converges to a point $x\in[a,b]$,
and we may assume, that $x_\ell$ is increasing.
Then, by Rolle's Theorem, since $\Lambda_1(x_\ell)=\Lambda_2(x_\ell)=0$, there are
points $u_\ell,v_\ell\in(x_\ell,x_{\ell+1})$ such that $\Lambda'_1(u_\ell)=\Lambda'_2(v_\ell)=0$.
But then, $\Lambda'_1(v_\ell)=\Lambda'_2(u_\ell)=1$. Hence, $\Lambda'$ cannot be continuous.

For the rest of the proof, observe, that $\tilde\Lambda$ avoids the point $z_0$. Thus, we have
\begin{eqnarray*}
n_{z_0}(\Lambda)&=&\operatorname{PV}\frac{1}{2\pi\mathrm i}\oint_\Lambda \frac{\mathrm dz}{z-z_0}=\\
&=&\frac{1}{2\pi\mathrm i}\oint_{\tilde\Lambda} \frac{\mathrm dz}{z-z_0}
+\sum_{\ell=1}^n\operatorname{PV}\frac{1}{2\pi\mathrm i}\oint_{\Gamma_\ell} \frac{\mathrm dz}{z-z_0}=\\
&=&n_{z_0}(\tilde\Lambda)+\sum_{\ell=1}^n\frac{\alpha_\ell}{2\pi}\,,
\end{eqnarray*}
where we have used~(\ref{eq-med}) in the last step.
\end{proof}
Proposition~\ref{prop-winding} generalizes immediately from curves
to cycles.
The Definition~\ref{def-winding} of a generalized winding number turns out to be useful
as it allows to generalize the residue theorem (see Theorem \ref{thm-residue} below). But before
we turn our attention this subject, let us briefly reformulate the 
formula~(\ref{windungszahl}) for the generalized winding number
as an integral in the real plane. Interestingly, while the winding number as a
complex integral requires an interpretation as a principal value,
the real counterpart turns out to have a bounded integrand.


If $\Lambda = x + \mathrm i y:[a,b]\to\C$ is a closed piecewise $C^1$ curve, then $\mathrm dz/z$ decomposes as
$$
\frac{\mathrm dz}{z}=
\frac{x\dot x + y \dot y}{x^2+y^2}\,\mathrm dt + \mathrm i \frac{x\dot y-y\dot x}{x^2+y^2}\,\mathrm dt.
$$
The considerations above imply that if $\Lambda$ is a closed piecewise $C^1$ immersion, then
$$
\operatorname{PV}\oint_\Lambda \frac{x\,\mathrm dx + y \,\mathrm dy}{x^2+y^2}= 0,
$$
hence only the imaginary part of $\mathrm dz/z$ is relevant for computing $n_0(\Lambda)$. We have the following proposition regarding its regularity:
\begin{prop}\label{winding}
Let $\Lambda=x+\mathrm i y:[a,b]\to \C$ be a closed piecewise $C^{1,1}$ immersion. Then
$$n_0(\Lambda)=\frac{1}{2\pi}\int_{a}^b\frac{x(t)\dot y(t)- y(t)\dot x(t)}{x(t)^2+y(t)^2}\,\mathrm dt$$
and the corresponding integrand is bounded. If $\Lambda$ is $C^2$ in a neighbourhood of a point $\tilde t\in (a,b)$ with $\Lambda(\tilde t)=0$, then
$$
\lim_{t\to \tilde t}\frac{x(t)\dot y(t)- y(t)\dot x(t)}{x(t)^2+y(t)^2} = \frac{1}{2}k_\Lambda(\tilde t)|\dot \Lambda(\tilde t)|,
$$
where $$k_\Lambda(\tilde t)=\frac{\dot x(\tilde t)\ddot y(\tilde t)- \dot y(\tilde t)\ddot x(\tilde t)}{(\dot x(\tilde t)^2+\dot y(\tilde t)^2)^{3/2}} $$ is the signed curvature of $\Lambda$ in $\tilde t$.
\end{prop}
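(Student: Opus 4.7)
The plan is to split $\mathrm dz/z$ into real and imaginary components and argue that, under the principal value, the real part integrates to zero while the imaginary part converges to an absolutely convergent Lebesgue integral. Since
$$
\frac{x\dot x + y\dot y}{x^2+y^2}\,\mathrm dt = \mathrm d\log|\Lambda|
$$
is an exact differential, on every connected component of the open set $\{t\in[a,b]:|\Lambda(t)|>\varepsilon\}$ the real integral equals the logarithmic difference of the endpoint values of $|\Lambda|$. Each inner component has both endpoints satisfying $|\Lambda|=\varepsilon$, so contributes zero; the two boundary components (if present) combine, and their contribution vanishes by the closed-curve condition $\Lambda(a)=\Lambda(b)$. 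Hence $\operatorname{PV}\oint_\Lambda (x\,\mathrm dx+y\,\mathrm dy)/(x^2+y^2)=0$.

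\textbf{Boundedness of the imaginary integrand.} This is the main obstacle. Near a zero $\tilde t$ of $\Lambda$, the $C^{1,1}$ hypothesis lets me apply Taylor's expansion with integral remainder, using the Lipschitz continuity of $\dot\Lambda$:
$$
\Lambda(t)=\dot\Lambda(\tilde t)(t-\tilde t)+O((t-\tilde t)^2),\qquad \dot\Lambda(t)=\dot\Lambda(\tilde t)+O(t-\tilde t),
$$
with one-sided estimates at piecewise corners. Writing $\tau:=t-\tilde t$ and expanding $x\dot y - y\dot x$, the coefficient of $\tau$ equals $\dot x(\tilde t)\dot y(\tilde t)-\dot y(\tilde t)\dot x(\tilde t)=0$; this antisymmetric cancellation is the crux and leaves a numerator of order $\tau^2$. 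The denominator satisfies $x^2+y^2=|\dot\Lambda(\tilde t)|^2\tau^2+O(\tau^3)$, and by the immersion hypothesis $|\dot\Lambda(\tilde t)|>0$, so the quotient stays bounded as $t\to\tilde t$. Boundedness plus dominated convergence then upgrades the principal-value integral of the imaginary part to a proper Lebesgue integral as $\varepsilon\searrow 0$, which combined with the first paragraph gives the formula for $n_0(\Lambda)$.

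\textbf{The curvature limit.} Under the additional $C^2$ hypothesis near $\tilde t$, I push the Taylor expansion one order further:
$$
x(t)=\dot x(\tilde t)\tau+\tfrac12\ddot x(\tilde t)\tau^2+o(\tau^2),\qquad \dot x(t)=\dot x(\tilde t)+\ddot x(\tilde t)\tau+o(\tau),
$$
and likewise for $y$. After the linear cancellation recorded above, a direct multiplication yields
$$
x\dot y - y\dot x = \tfrac12\bigl(\dot x(\tilde t)\ddot y(\tilde t)-\dot y(\tilde t)\ddot x(\tilde t)\bigr)\tau^2+o(\tau^2),\qquad x^2+y^2=|\dot\Lambda(\tilde t)|^2\tau^2+o(\tau^2).
$$
Dividing and using the algebraic identity $k_\Lambda(\tilde t)|\dot\Lambda(\tilde t)|=(\dot x\ddot y-\dot y\ddot x)/|\dot\Lambda|^2$ produces the asserted limit $\tfrac12 k_\Lambda(\tilde t)|\dot\Lambda(\tilde t)|$.
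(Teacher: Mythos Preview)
Your proof is correct and follows essentially the same approach as the paper: both establish boundedness of the imaginary integrand near a zero $\tilde t$ via the Lipschitz property of $\dot\Lambda$ to get a numerator of order $\tau^2$ against a denominator $|\dot\Lambda(\tilde t)|^2\tau^2+o(\tau^2)$, and both compute the curvature limit by pushing the Taylor expansion one order further under the $C^2$ hypothesis. Your treatment of the real part via $\mathrm d\log|\Lambda|$ on the components of $\{|\Lambda|>\varepsilon\}$ is a clean self-contained addition; the paper instead cites this vanishing as a consequence of the preceding principal-value discussion.
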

\begin{proof}
Let $\Lambda$ be a closed piecewise $C^{1,1}$ immersion. If $\Lambda$ avoids the origin, then the
integrand is obviously bounded. On the other hand, as in the proof of Proposition~\ref{prop-winding},
it follows that $\Lambda$ can have at most finitely many
zeros, say $a<t_0<t_1< \ldots <t_n<b$. It suffices to concentrate on one of the zeros $t_\ell$ of $\Lambda$, and to simplify the
notation we assume $\tilde t=0$ for the rest of the proof.
We first show that the integrand is bounded provided $\Lambda$ is $C^{1,1}$ in a neighbourhood $U=(-\varepsilon,\varepsilon)$ of $0$. 
In this case, $\dot x$ and $\dot y$ are Lipschitz functions on $U$
and 
\begin{align}\label{h1}
|x(t)\dot y(t)-y(t)\dot x(t)|&=\left|
\int_0^t \dot x(s)\,\mathrm ds \dot y(t)-
\int_0^t \dot y(s)\,\mathrm ds \dot x(t)\right|\nonumber\\
&\leq\int_0^t \big|\!
\bigl(\dot x(s) -
\dot x(t)\bigr)\dot y(t)+
\dot x(t)\bigl(\dot y(t)-
\dot y(s)\bigr)\!
\big|\,\mathrm ds\nonumber\\
& \leq C \int_0^t\big(|s-t||\dot y(t)| + |\dot x(t)||s-t|\big)\,\mathrm ds=O(t^2).
\end{align}
On the other hand
\begin{equation}\label{h2}
x(t)^2+y(t)^2 =\bigl(t\dot x(0)+o(t)\bigr)^2+\bigl(t\dot y(0)+o(t)\bigr)^2=
t^2\bigl(\dot x(0)^2+\dot y(0)^2\bigr)+o(t^2).
\end{equation}
Together, (\ref{h1}) and (\ref{h2}), imply that the integrand in Proposition~\ref{winding} is bounded in $U$.

If $\Lambda$ is only $C^{1,1}$ on $U^-=(-\varepsilon,0]$ and $U^+=[0,\varepsilon)$, the claim follows in the same way
by considering the unilateral intervals left and right of the zero.


Now we assume that $\Lambda$ is $C^2$ in a neighbourhood $U=(-\varepsilon,\varepsilon)$ of the zero $\tilde t=0$.
It remains to show that the limit 
$$
\lim_{t\to 0}\frac{x(t)\dot y(t)- y(t)\dot x(t)}{x(t)^2+y(t)^2}
$$
has the geometrical interpretation stated in the proposition. In fact, if $\Lambda(0)=0$
we find
$$\begin{aligned}
x(t)\dot y(t)& = \left(t\dot x(0)+\frac{t^2}{2}\ddot x(0)+o(t^2)\right)(\dot y(0)+t\ddot y(0)+o(t))\\
& = t\dot x(0)\dot y(0)+t^2\left(\dot x(0)\ddot y(0)+\frac{1}{2}\ddot x(0)\dot y(0)\right)+o(t^2)
\end{aligned}$$
and hence
\begin{equation}\label{h3}
\begin{aligned}
x(t)\dot y(t)-\dot x(t) y(t) & = t^2\left(\dot x(0)\ddot y(0)-\dot y(0)\ddot x(0)+\frac{1}{2}\left(\ddot x(0)\dot y(0)-\ddot y(0)\dot x(0)\right)\right)+o(t^2)\\
& = \frac{t^2}{2}\Big(\dot x(0)\ddot y(0)-\dot y(0)\ddot x(0)\Big)+o(t^2).
\end{aligned}
\end{equation}
On the other hand
\begin{equation}\label{h4}\begin{aligned}
x^2(t)+y^2(t) & = \left(t\dot x(0)+\frac{t^2}{2}\ddot x(0) + o(t^2)\right)^2+\left(t\dot y(0)+\frac{t^2}{2}\ddot y(0) + o(t^2)\right)^2\\
& = t^2(\dot x(0)^2+\dot y(0)^2)+t^3(\dot x(0)\ddot x(0)+\dot y(0)\ddot y(0)) + o(t^3).
\end{aligned}\end{equation}
From \eqref{h3} and \eqref{h4} we deduce
$$
\lim_{t\to \tilde t}\frac{x(t)\dot y(t)- y(t)\dot x(t)}{x(t)^2+y(t)^2} = \frac{1}{2}k_\Lambda(\tilde t)|\dot \Lambda(\tilde t)|.
$$
\end{proof}
It is worth noticing, that Proposition~\ref{winding} is more than just a technical
remark. We will see in Section~\ref{appl} an application of the observation
that the imaginary part of the integrand is bounded.

The geometrical meaning of the winding number can be used to characterize the topological phases in one-dimensional chiral non-Hermitian systems. Chiral symmetry ensures that the winding number of Hermitian systems are integers, but
non-Hermitian systems  can take half integer values: see \cite{physik} for the corresponding physical interpretation of Proposition~\ref{winding}.

\begin{example}\label{zeppelin}
Consider the curve $\Lambda:[0,2\pi]\to \C$ given by
$$\Lambda(t)=x(t)+\mathrm iy(t):= \cos(t) + \cos(2t)+\mathrm i\sin (2t)$$
which passes through the origin at $t=\pi$ (see Figure~\ref{fig-6}). According to Proposition~\ref{winding},
$$
n_0(\Lambda)=\frac{1}{2\pi}\int_{a}^b\frac{x(t)\dot y(t)- y(t)\dot x(t)}{x(t)^2+y(t)^2}\,\mathrm dt = \frac{3}{2}
$$
and the corresponding integrand is continuous.
\begin{figure}[h]
\begin{center}
\begin{tikzpicture}[scale=1.8]
\draw[->] (-1.5,0) -- (2.3,0)  node[anchor=west] {\small$x$};
\draw[->] (0,-1.2) -- (0,1.2)  node[anchor=east] {\small$y$};
\draw[thick,smooth,color=blue,samples=80,domain=0:6.28] plot ( {cos(\x r)+cos(2*\x r)} , {sin(2*\x r)} );
\draw[thick,dashed,smooth,color=darkgreen,samples=80,domain=0:6.28] plot ( {.1+cos(\x r)+cos(2*\x r)} , {sin(2*\x r)} );
\draw[thick,dotted,smooth,color=red,samples=80,domain=0:6.28] plot ( {cos(\x r)+cos(2*\x r)-.1} , {sin(2*\x r)} );
\end{tikzpicture}
\caption{The (solid) curve from Example \ref{zeppelin} has winding number $\frac{3}{2}$ with
respect to the origin, and the dashed and dotted one have winding number 2 and 1 respectively.}\label{fig-6}
\end{center}
\end{figure}
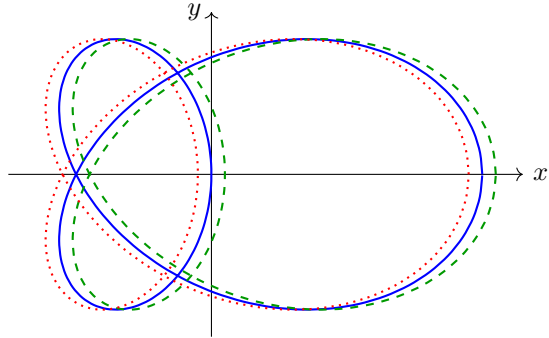
\end{example}

\section{A Generalized Residue Theorem}
Let $U\subset \C$ be an open neighborhood of zero and let $f$ be a holomorphic
function on $U\setminus\{0\}$. Then there exists a Laurent series which represents $f$   
in a punctured neighborhood $\{z\in\mathbb C: 0<|z|<R\}$ of zero:
$$
f(z) = \underbrace{\ldots + \frac{a_{-1}}{z}}_{=g(z)}+\underbrace{a_0+a_1z+\ldots}_{=h(z)}.
$$
For a closed piecewise  $C^1$ curve $\gamma$ with $|\gamma|<R$, we have 
$$
\operatorname{PV}\oint_{\gamma}f(z)\,\mathrm dz = \operatorname{PV}\oint_{\gamma}(g(z)+h(z))\,\mathrm dz = \operatorname{PV}\oint_{\gamma}g(z)\,\mathrm dz
$$
by the Cauchy integral theorem, provided the principal value exists. 
If $f$ has only a pole of first order in $0$, then the discussion in Section~\ref{generalizedwindingnumber}
shows, that the principal value indeed exists. The general case however is more
delicate: let us
first consider a model sector curve $\gamma$ with angle $\alpha\in[0,2\pi]$, and let $n>1$. Then we have
\begin{align}
\operatorname{PV}\oint_{\gamma}\frac{\mathrm dz}{z^n} & =\lim_{\varepsilon\searrow 0}\left(\int_\varepsilon^r\frac{\mathrm dt}{t^n}+\int_0^\alpha \frac{r\mathrm i\mathrm e^{it}}{r^n\mathrm e^{\mathrm int}}\,\mathrm dt + \int_0^{r-\varepsilon}\frac{-\mathrm e^{i\alpha}}{(r-t)^n\mathrm e^{\mathrm in\alpha}}\,\mathrm dt\right)\nonumber\\
& = \lim_{\varepsilon\searrow 0}\left(\frac{r^n\varepsilon-r\varepsilon^n}{(r\varepsilon)^n(n-1)}-
\frac{\mathrm e^{-\alpha(n-1)\mathrm i}-1}{r^{n-1}(n-1)}+\frac{r\varepsilon^n-r^n\varepsilon}{(r\varepsilon)^n(n-1)}\mathrm e^{-\alpha(n-1)\mathrm i}\right)
\nonumber\\
& = \lim_{\varepsilon\searrow 0}\frac{1-\mathrm e^{-\mathrm i(n-1)\alpha}}{(n-1)\varepsilon^{n-1}}=
\begin{cases}
0&\text{if $\frac{\alpha(n-1)}{2\pi}\in\Z$,}\\
\text{complex infinity}&\text{otherwise.}
\end{cases}\label{ganz}
\end{align}
On an intuitive level it is clear that an angle condition decides whether the 
limit exists or not: Indeed, in order to compensate the purely real values
on $\gamma_1$ (see Figure~\ref{fig-gamma}), the integral 
along $\gamma_3$ cannot have a  non-real singular part.
Hence, the principal value in~(\ref{ganz}) exists (and is actually $0$) if and only if 
$$
\alpha = \frac{2k\pi}{n-1}
$$
for some $k\in\Z$. Stated differently: If $\alpha = \frac{p}{q}\pi$ for some $p,q\in\mathbb N$, $q\neq 0$,
then 
\begin{equation}\label{principalvalue}
\operatorname{PV}\oint_{\gamma}\frac{\mathrm dz}{z^n}=0
\end{equation}
if $n=\frac{2kq}{p}+1$ for an integer $k\ge 0$, otherwise the principal value \eqref{principalvalue} is infinite. 
Therefore we obtain:
\begin{lemma}\label{lem-lem}
Let $\alpha = \frac{p}{q}\pi$ for some $p,q\in\mathbb N$, $q\neq 0$.
If the Laurent series of $f$ only contains terms $a_n/z^n$ for indices of the form $n=\frac{2kq}{p}+1$
for integers $k\ge 0$, and if $\gamma$ is a model sector-curve with angle $\alpha$  and radius smaller than the
radius of convergence of the Laurent series, then there holds
\begin{equation}\label{con}
\operatorname{PV}\frac1{2\pi\mathrm i}\oint_{\gamma} f(z)\,\mathrm dz = n_0(\gamma)\operatorname{res}_0f(z).
\end{equation}
\end{lemma}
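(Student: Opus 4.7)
The plan is to reduce to the two basic cases already handled in the lead-up to equation (\ref{ganz}): the $1/z$ term, whose principal value over $\gamma$ is $2\pi\mathrm i\cdot n_0(\gamma)$ by Definition~\ref{def-winding}, and the higher-order terms $1/z^n$ with $n>1$, which under the angle condition contribute nothing.

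First I would decompose $f=g+h$ into its principal part $g(z)=\sum_{k\ge 0} a_{n_k}/z^{n_k}$ (with $n_k=\tfrac{2kq}{p}+1$) and its holomorphic part $h$ on the disk of radius $R$. Because $\gamma$ is a closed piecewise $C^1$ curve contained in this disk and $h$ is holomorphic there, Cauchy's integral theorem gives $\oint_\gamma h(z)\,\mathrm dz=0$; since $h$ is bounded on $|\gamma|$, the principal value coincides with this ordinary integral, so the $h$-part contributes nothing. Thus $\operatorname{PV}\oint_\gamma f\,\mathrm dz=\operatorname{PV}\oint_\gamma g\,\mathrm dz$.

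Next I would handle $g$ term-by-term. For each fixed $\varepsilon>0$ the truncated curve $\{|\gamma|>\varepsilon\}$ is a compact subset of the punctured disk $\{0<|z|<R\}$, and the Laurent series converges uniformly on such annuli, which justifies interchanging summation and integration:
$$
\int_{|\gamma|>\varepsilon} g(z)\,\mathrm dz \;=\; \sum_{k\ge 0} a_{n_k}\int_{|\gamma|>\varepsilon}\frac{\mathrm dz}{z^{n_k}}.
$$
For $k\ge 1$ the explicit computation preceding (\ref{ganz}) yields
$$
\int_{|\gamma|>\varepsilon}\frac{\mathrm dz}{z^{n_k}}=\frac{1-\mathrm e^{-\mathrm i(n_k-1)\alpha}}{(n_k-1)\varepsilon^{n_k-1}},
$$
and the arithmetic condition $(n_k-1)\alpha=\tfrac{2kq}{p}\cdot\tfrac{p\pi}{q}=2k\pi$ forces $\mathrm e^{-\mathrm i(n_k-1)\alpha}=1$, so each of these integrals vanishes identically in $\varepsilon$ (not merely in the limit). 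For $k=0$ we have $n_0=1$, so the surviving term is $a_1\int_{|\gamma|>\varepsilon}\mathrm dz/z$, whose limit as $\varepsilon\searrow 0$ is $a_1\cdot 2\pi\mathrm i\cdot n_0(\gamma)$ by Definition~\ref{def-winding}. Since $a_1=\operatorname{res}_0 f$, dividing by $2\pi\mathrm i$ gives (\ref{con}).

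The only real technical step is the interchange of sum and integral, which is harmless because the principal part converges normally on compact subsets of the punctured disk of convergence. The conceptual observation worth emphasizing is that the angle condition does more than ensure existence of the principal value; it actually makes the finite-$\varepsilon$ integrals vanish exactly, so no delicate cancellation in the limit is needed.
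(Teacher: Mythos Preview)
Your argument is correct. For the pole case the paper proceeds exactly as you do, invoking (\ref{principalvalue}) directly. For the essential singularity case, however, the paper takes a different route: instead of interchanging the full Laurent series with the integral at fixed $\varepsilon$, it approximates $f$ by the truncations $f_n(z)=\sum_{k=-n}^{\infty}a_kz^k$ and runs a two-parameter $(\varepsilon,n)$ argument---first choosing $\varepsilon$ small so that $\tfrac{1}{2\pi\mathrm i}\int_{|\gamma|>\varepsilon}f_n$ is within $\delta$ of the target (this choice being uniform in $n$ thanks to~(\ref{ganz})), and then choosing $n$ large (depending on $\varepsilon$) so that $f-f_n$ is uniformly small on $\{|\gamma|>\varepsilon\}$. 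Your approach is more direct because it isolates a sharper fact that the paper does not make explicit: under the angle hypothesis the integrals $\int_{|\gamma|>\varepsilon}z^{-n_k}\,\mathrm dz$ vanish \emph{identically} in $\varepsilon$, so after the (legitimate) interchange the series collapses to the single $1/z$ term and no limiting process is needed for the higher-order contributions at all. The paper's version, by contrast, sidesteps the need to justify interchanging an infinite sum with the contour integral.
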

\begin{proof}
If $f$ has a pole in $0$, then (\ref{con}) follows directly from~(\ref{principalvalue}).
If $0$ is an essential singularity of $f=\sum_{k\in\mathbb Z}a_kz^k$, we observe that
the Laurent series $f_n(z)=\sum_{k=-n}^\infty a_kz^k$ converges locally uniformly to $f(z)$.
Then we have for $\varepsilon>0$
\begin{multline*}
\frac1{2\pi\mathrm i}\int_{|\gamma|>\varepsilon} f(z)\,\mathrm dz = 
\frac1{2\pi\mathrm i}\int_{|\gamma|>\varepsilon} \bigl(f(z)-f_n(z)\bigr)\,\mathrm dz +\\+
\Bigl(\frac1{2\pi\mathrm i}\int_{|\gamma|>\varepsilon} f_n(z)\,\mathrm dz-
n_0(\gamma)\operatorname{res}_0f(z)\Bigr)+n_0(\gamma)\operatorname{res}_0f(z)=:I+II+III.
\end{multline*}
Now, for $\delta>0$, we choose $\varepsilon>0$ small enough, such that the absolute value of the second term $II$ on the right
is smaller than $\delta$. Note that by~(\ref{ganz}) this choice does not depend on $n$.
Then we can choose $n$, depending on $\varepsilon$, 
large enough such that the absolute value of the first term $I$ is also smaller than $\delta$,
and the claim follows.
\end{proof}
For a more general curve than a model sector curve, we need the following definition:
\begin{definition}
Let $\Gamma:(a,b)\to\mathbb C$ be a piecewise $C^1$ curve and $\Gamma(x_1)=:z_1$.
Let $t^+$ and $t^-$ be the tangents in $z_1$ in the direction
$\lim_{x\searrow x_1}\dot\Gamma(x)$ and $-\lim_{x\nearrow x_1}\dot\Gamma(x)$ respectively.
We say that $\Gamma$ is flat of order $n$ in $x_1$, if 
\begin{eqnarray*}
&&|\Gamma(x)-P^+(\Gamma(x))|=o(|\Gamma(x)-z_1|^n)\text{ for $x\searrow x_1$ and}\\
&&|\Gamma(x)-P^-(\Gamma(x))|=o(|\Gamma(x)-z_1|^n)\text{ for $x\nearrow x_1$}
\end{eqnarray*}
where $P^+$ and $P^-$ denote the orthogonal projection to $t^+$ and $t^-$ respectively (see Figure~\ref{fig-flat}).
\end{definition}
Notice, that a piecewise $C^1$ curve is always flat of order 1 in all of its points.
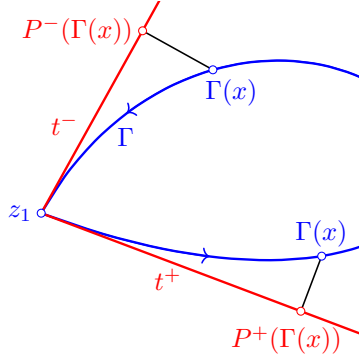
\begin{figure}[h]
\begin{center}
\begin{tikzpicture}[x=160,y=160]
\clip(-.1,-.4) rectangle (.75,.5);

\draw[color=blue,line width=.9pt,smooth,domain=-.5236:.5236] plot ({.9*cos(\x r)*cos(3*\x r)-.7*cos(3*\x r)*sin(\x r)}, {.2*cos(\x r)*cos(3*\x r) +1.2*cos(3*\x r)*sin(\x r)});
\draw[->,color=blue,line width=.6pt,smooth,domain=-.5236:-.4] plot ({.9*cos(\x r)*cos(3*\x r)-.7*cos(3*\x r)*sin(\x r)}, {.2*cos(\x r)*cos(3*\x r) +1.2*cos(3*\x r)*sin(\x r)});
\draw[->,color=blue,line width=.6pt,smooth,domain=0:.4] plot ({.9*cos(\x r)*cos(3*\x r)-.7*cos(3*\x r)*sin(\x r)}, {.2*cos(\x r)*cos(3*\x r) +1.2*cos(3*\x r)*sin(\x r)});

\draw[color=red,line width=.9pt,smooth,domain=0:0.523599] plot ({3.38827*\x}, {-1.28038*\x});
\draw[color=red,line width=.9pt,smooth,domain=0:0.523599] plot ({1.28827*\x}, {2.31962*\x});

\draw[color=black,line width=.6pt] (0.663051, -0.101669) to (0.613818, -0.231954);
\draw [color=blue,fill=white] (0.663051, -0.101669) circle (1.5pt);
\draw [color=red,fill=white] (0.613818, -0.231954) circle (1.5pt);

\draw[color=black,line width=.6pt] (0.405873, 0.339207) to (0.239658, 0.43152);
\draw [color=blue,fill=white] (0.405873, 0.339207) circle (1.5pt);
\draw [color=red,fill=white] (0.239658, 0.43152) circle (1.5pt);
\draw [color=blue,fill=white] (0, 0) circle (1.5pt);

\begin{small}
\draw[color=blue] (.2,.18) node[anchor=center]  {\small$\Gamma$};
\draw[color=blue] (0.663051, -0.101669) node[anchor=south]  {\small$\Gamma(x)$};
\draw[color=blue] (0.45, 0.339207) node[anchor=north]  {\small$\Gamma(x)$};

\draw[color=red] (0.58, -0.245) node[anchor=north]  {\small$P^+(\Gamma(x))$};
\draw[color=red] (0.239658, 0.43152) node[anchor=east]  {\small$P^-(\Gamma(x))$};

\draw[color=red] (0.11, 0.21) node[anchor=east]  {\small$t^-$};
\draw[color=red] (0.3, -0.11) node[anchor=north]  {\small$t^+$};
\draw[color=blue] (0, -0) node[anchor=east]  {\small$z_1$};
\end{small}

\end{tikzpicture}
\caption{$\Gamma$ is flat of order $n$ in $x_1$.}\label{fig-flat}
\end{center}
\end{figure}

Now, let us consider a closed piecewise $C^1$ immersion $\Gamma:[a,b]\to\C$ starting and ending in $0$ but
such that $\Gamma(t)\ne 0$ for all $t\ne a,b$ 
and such that the (positively oriented) angle between $\lim_{t\searrow a}\dot\Gamma(t)$ and $-\lim_{t\nearrow b}\dot\Gamma(t)$ equals $\alpha\in[0,2\pi]$. 
We assume that, after  a suitable rotation, $\lim_{t\searrow a}\dot\Gamma(t)$ is a positive
real number and that $\Gamma$  is homotopic in the sense of~(\ref{eq-hom}) to a model sector-curve $\gamma$
with the same angle $\alpha$. Moreover, we assume that $\Gamma$ is flat of order $n$ in $0$.
Then, as in Section~\ref{generalizedwindingnumber} (see Figure~\ref{fig-cauchy}), we have for $n>1$
$$\begin{aligned}
\left|\int_{|\Gamma|>\varepsilon}\frac{\mathrm dz}{z^n}-\int_{|\gamma|>\varepsilon}\frac{\mathrm dz}{z^n} \right|& = \left| \int_{\beta}\frac{\mathrm dz}{z^n}-\int_{|\gamma|>\varepsilon}\frac{\mathrm dz}{z^n}\right|=\\
& = \left|\int_{\alpha_1+\alpha_2}\frac{\mathrm dz}{z^n}\right|\\
 &\leq \frac{1}{\varepsilon^n}\underbrace{\operatorname{Length}(\alpha_1+\alpha_2)}_{=o(\varepsilon^n)}
\longrightarrow 0 \text{ for $\varepsilon\searrow 0$.}
\end{aligned}$$
Hence, we only have a finite principal value if $\alpha = \frac{p}{q}\pi$ for some $p,q\in\mathbb N$, $q\neq 0$, and
\begin{equation}\label{34}
\operatorname{PV}\oint_{\Gamma}\frac{\mathrm dz}{z^n}=0
\end{equation}
if $n=\frac{2kq}{p}+1$ for an integer $k\ge 0$, otherwise the principal value is infinite. This leads
to the main Theorem:
\begin{theorem}\label{thm-residue}
Let $U\subset \C$ be an open set, and $S=\{z_1,z_2,\ldots\}\subset U$ be a set without
accumulation points in $U$ such that $f:U\setminus S\to\C$ is holomorphic.
Moreover, let $C$ be a null-homologous immersed piecewise 
 $C^1$ cycle in $U$ such that $C$ only contains singularities of $f$ which are poles of order 1. Then
\begin{equation}\label{he0}
\operatorname{PV}\frac{1}{2\pi\mathrm i}\oint_C f(z)\,\mathrm dz = \sum_\ell n_{z_\ell}(C)\operatorname{res}_{z_\ell}f(z).
\end{equation}
The formula remains correct for poles of higher order on $C$ if
the following two conditions hold:\renewcommand{\labelenumi}{(\theenumi)}
\renewcommand{\theenumi}{\Alph{enumi}}%
\begin{enumerate}
\item If $z_0$ is a pole on $C$ of order $n$, then $C$ is flat of order $n$ in $z_0$, or,
if $z_0$ is an essential singularity, $C$ coincides near $z_0$ locally with the tangents $t^+$ and $t^-$ in $z_0$.\label{cond1}
\item If $z_0$ is a singularity of $f$ on $C$ and $\alpha$ is the angle between the tangents
$t^+$ and $t^-$ in $z_0$, then
$\alpha=\frac pq\pi$, $p,q\in\mathbb N, q\neq 0$, and the Laurent series of $f$ in $z_0$ contains only
terms $a_n/(z-z_0)^n$ with $a_n\neq 0$ for indices of the form $n=\frac{2kq}p+1$, $k\ge 0$ an integer.\label{cond2}
\end{enumerate}
\end{theorem}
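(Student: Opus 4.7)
The plan is to reduce the statement to the local analysis of Section~\ref{generalizedwindingnumber} together with the computation~\eqref{34} carried out immediately before the theorem. First I would extend the decomposition of Proposition~\ref{prop-winding} from curves to cycles: at every passage of $C$ through a singularity $z_\ell\in S$ lying on $C$, excise a short subarc and replace it by a small clockwise circular detour around~$z_\ell$. This produces a modified cycle $\tilde C$ avoiding $S$ entirely, together with finitely many closed sector-like curves $\Gamma_{\ell,j}$ anchored at the $z_\ell$'s with opening angles $\alpha_{\ell,j}$, so that formally $C=\tilde C+\sum_{\ell,j}\Gamma_{\ell,j}$. Since the detours lie inside $U$ and can be made arbitrarily small, $\tilde C$ inherits the winding numbers of $C$ at every point outside~$U$ and remains null-homologous there, and Proposition~\ref{prop-winding} applied pointwise yields $n_{z_\ell}(C)=n_{z_\ell}(\tilde C)+\sum_j \alpha_{\ell,j}/(2\pi)$ for every such $z_\ell$.

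The classical residue theorem applied to $\tilde C$ now gives $\frac{1}{2\pi\mathrm i}\oint_{\tilde C} f\,\mathrm dz = \sum_{z_k\in S} n_{z_k}(\tilde C)\operatorname{res}_{z_k} f$. To evaluate each PV-integral $\operatorname{PV}\frac{1}{2\pi\mathrm i}\oint_{\Gamma_{\ell,j}} f\,\mathrm dz$, I would split $f=g_\ell+h_\ell$ locally at $z_\ell$ into its principal part $g_\ell$ and the holomorphic remainder $h_\ell$. Since each $\Gamma_{\ell,j}$ sits in a simply connected neighborhood on which $h_\ell$ is regular, Cauchy's theorem gives $\oint_{\Gamma_{\ell,j}} h_\ell\,\mathrm dz = 0$, so only $g_\ell$ contributes. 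In the simple-pole case $g_\ell(z)=a_{-1}/(z-z_\ell)$, and~\eqref{eq-med} immediately yields $\operatorname{PV}\frac{1}{2\pi\mathrm i}\oint_{\Gamma_{\ell,j}} f\,\mathrm dz = (\alpha_{\ell,j}/(2\pi))\operatorname{res}_{z_\ell} f$. Summing all contributions and substituting the identity from Proposition~\ref{prop-winding} collapses the detour correction and produces exactly~\eqref{he0}.

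For poles of higher order on $C$ under conditions~(A) and~(B), the principal part is a finite sum $g_\ell(z)=\sum_{n=1}^{N} a_{-n}(z-z_\ell)^{-n}$. By the flatness condition~(A), the computation~\eqref{34} reduces $\operatorname{PV}\oint_{\Gamma_{\ell,j}}(z-z_\ell)^{-n}\,\mathrm dz$ to the same PV over a model sector-curve, which by~\eqref{ganz} and the admissible angle condition~(B) vanishes for every $n>1$ that appears. Thus only the residue term survives and the local contribution is again $(\alpha_{\ell,j}/(2\pi))\operatorname{res}_{z_\ell} f$. For an essential singularity on $C$, condition~(A) forces $\Gamma_{\ell,j}$ to coincide with a genuine model sector-curve near $z_\ell$, so Lemma~\ref{lem-lem} applies verbatim to deliver the same local formula.

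The technical heart of the argument is the essential-singularity case: the vanishing in~\eqref{ganz} is only a term-by-term statement, so one must upgrade it by the $I$-$II$-$III$ splitting used to prove Lemma~\ref{lem-lem}. One chooses $\varepsilon$ small enough that the PV-integral of any finite truncation of the principal part is within $\delta$ of the residue term (using~\eqref{ganz} uniformly in the truncation order), and then chooses the truncation order large enough that the tail of the Laurent series contributes at most $\delta$ on $\{|z-z_\ell|\ge\varepsilon\}$ by local uniform convergence. Everything else is bookkeeping, and one concludes by taking the detour radii to zero and using that the PV-integral is independent of the auxiliary cut-off.
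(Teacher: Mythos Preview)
Your proposal is correct and follows essentially the same route as the paper's proof: decompose $C=\tilde C+\sum\Gamma_{\ell,j}$ via small clockwise detours, apply the classical residue theorem to the null-homologous cycle $\tilde C$, handle each $\Gamma_{\ell,j}$ locally via~\eqref{eq-med}, Lemma~\ref{lem-lem} and~\eqref{34}, and then recombine using the winding-number identity of Proposition~\ref{prop-winding}. One small remark: your closing sentence about ``taking the detour radii to zero'' is unnecessary---the decomposition is exact as a formal sum of chains for any fixed sufficiently small radius, so no further limit in the detour size is required.
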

\begin{proof}
Let $C=\sum_{\ell=1}^k m_\ell \gamma_\ell$ with $m_\ell\in\mathbb Z$ and where $\gamma_\ell:[0,1]\to\mathbb C$
are closed piecewise $C^1$ immersions. Then, there are at most finitely many
points $x_{\ell1},\ldots,x_{\ell k_\ell}$ such that $\gamma_\ell(x_{\ell j})=z_{\ell j}\in S$. 
For each $\ell$ consider a decomposition
$\gamma_\ell = \tilde\gamma_\ell + \Gamma_{\ell 1}+\ldots + \Gamma_{\ell k_\ell}$, where
$\tilde\gamma_\ell$ coincides with $\gamma_\ell$ outside of small
neighborhoods of  the points $x_{\ell j}$ and avoids the singularity at $\gamma_\ell({x_{\ell j}})$
by driving around it on small circular arcs in clockwise direction. The closed curves
$\Gamma_{\ell j}$ are homotopic in the sense of~(\ref{eq-hom}) to a model sector-curve
with oriented angle $\alpha_{\ell j}$ between the tangents $\lim_{t\searrow x_{\ell j}}\dot\gamma_\ell$ and 
$-\lim_{t\nearrow x_{\ell j}}\dot\gamma_\ell$. The circular arcs are chosen small enough
such that no singularity lies in the interior of the sectors whose boundary are the curves $\Gamma_{\ell j}$,
and such that these sectors are contained in $U$.
Observe that the cycle $\tilde C:=\sum_{\ell=1}^k m_\ell \tilde\gamma_\ell$
avoids the singularities of $f$ and is null-homologous in $U$. Hence, in the sequel we may apply
the classical residue theorem to $\tilde C$.

Now, suppose that the two conditions~(\ref{cond1}) and~(\ref{cond2}) hold. This
covers in particular the case when only poles of first order lie on $C$.
Then, we have, by the classical residue theorem applied with $\tilde C$,
by Lemma~\ref{lem-lem}, and~(\ref{34})
\begin{align}
\operatorname{PV}\frac1{2\pi\mathrm i}\oint_C f(z)\,\mathrm dz &=
\operatorname{PV}\frac1{2\pi\mathrm i}\oint_{\tilde C} f(z)\,\mathrm dz+
\sum_{\ell=1}^k m_\ell \sum_{j=1}^{k_\ell}\operatorname{PV}\frac1{2\pi\mathrm i} \oint_{\Gamma_{\ell j}} f(z)\,\mathrm dz=\nonumber\\
 &=\sum_{z\in S} n_{z}(\tilde C)\operatorname{res}_{z}f(z)+
 \sum_{\ell=1}^k m_\ell \sum_{j=1}^{k_\ell} n_{z_{\ell j}}(\Gamma_{\ell j})\operatorname{res}_{z_{\ell j}}f(z).\label{he1}
\end{align}
The first sum in~(\ref{he1}) runs over 
\renewcommand{\labelenumi}{(\theenumi)}
\renewcommand{\theenumi}{\Roman{enumi}}%
\begin{enumerate}
\item the singularities which are not lying on $C$, with winding number $\neq 0$\label{ke1}.
\item the singularities on $C$.\label{ke2}
\end{enumerate}
Thus, the summands in~(\ref{ke1}) appear exactly also in the sum in~(\ref{he0}) since
for singularities $z$ not on $C$ we have $n_{z}(C)=n_{z}(\tilde C)$. 
The summands in~(\ref{ke2})
coming from a singularity $z_{\ell j}$ on $C$ together with the corresponding terms 
in the double sum in~(\ref{he1}) give
$$
\Bigl(n_{z_{\ell j}}(\tilde C) +
 \sum_{\ell=1}^k m_\ell \sum_{j=1}^{k_\ell}n_{z_{\ell j}}(\Gamma_{\ell j})
\Bigr)\operatorname{res}_{z_{\ell j}}f(z)=n_{z_{\ell j}}(C)\operatorname{res}_{z_{\ell j}}f(z)
$$
and we are done.
\end{proof}
As corollaries of Theorem~\ref{thm-residue} we obtain the residue
theorems~\cite[Theorem 1]{legua} and~\cite[Theorem 4.8f]{henrici}.

\subsection{Application}\label{appl}
In~\cite{legua}, the version of the residue theorem is used to calculate
principal values of integrals. At first sight it seems that this
is the only advantage of Theorem~\ref{thm-residue} over the
classical residue theorem. After all, poles on the curve $\gamma$
necessarily mean that one is forced to consider principal values.
However, Proposition~\ref{winding} shows that it is possible
to use Theorem~\ref{thm-residue} to compute integrals with
bounded integrand. 
\begin{example}
We want to compute the integral
$$
\int_0^\infty \frac{\operatorname{sinc}(t) \sinh (t)}{\cos (t)+\cosh (t)}\,\mathrm dt.
$$
The current computer algebra systems give up on this integral after
giving it some thought. To determine the integral we interpret the integrand as follows
$$
\frac{\operatorname{sinc}(t) \sinh (t)}{ \cos (t)+\cosh (t)}=
\operatorname{im}f(\gamma_3(t))\dot\gamma_3(t)
$$
for
$$
f(z)=-\frac{\cos(z/2)}{z\cosh(z/2)}
$$
and $\gamma = \gamma_1 + \gamma_2 - \gamma_3$, where\arraycolsep3pt
$$\begin{array}{lrll}
\gamma_1:& [0,r]&\to\C,                                &~ t\mapsto t-\mathrm it,\\
\gamma_2:&[-\frac{\pi}{4},\frac{\pi}4]&\to\C,  &~ t\mapsto \sqrt 2r\mathrm e^{\mathrm it},\\
\gamma_3:&[0,r]&\to\C,                                 &~ t\mapsto t+\mathrm it.
\end{array}$$
Note that $f$ has a pole of order $1$ in $z=0$ on $\gamma$ with residue $-1$. The winding number of
$\gamma$ with respect to $z=0$ is $\frac14$. 
By Theorem~\ref{thm-residue} we get:
$$
\operatorname{im}\oint_\gamma f(z)\,\mathrm dz=
\operatorname{im}\Bigl(\int_{\gamma_1} f(z)\,\mathrm dz+\int_{\gamma_2} f(z)\,\mathrm dz-\int_{\gamma_3} f(z)\,\mathrm dz\Bigr)
=\operatorname{im}\bigl(\frac14\cdot (-1)\cdot 2\pi\mathrm i\bigr)=-\frac\pi2.
$$
The integral along $\gamma_2$ converges to $0$ as $r$ tends to infinity, and
$$
\operatorname{im}\int_{\gamma_1} f(z)\,\mathrm dz=-\operatorname{im}\int_{\gamma_3} f(z)\,\mathrm dz=
-\int_0^r \frac{\operatorname{sinc}(t) \sinh (t)}{ \cos (t)+\cosh (t)}\,\mathrm dt.
$$
Hence, we find the value of this improper integral:
$$
\int_0^\infty \frac{\operatorname{sinc} (t) \sinh (t)}{ \cos (t)+\cosh (t)}\,\mathrm dt=\frac\pi4\,.
$$
\end{example}

\subsection{Connection to the Sokhotski\u{\i}-Plemelj Theorem}
In this section, we want to briefly show that the above-mentioned version of the residue theorem in~\cite[Theorem 1]{legua} 
can be obtained as a corollary of the Sokhotski\u{\i}-Plemelj Theorem.

Let $U\subset \mathbb C$ be an open set, let $S=\{z_1,\ldots\}\subset U$ be a set without accumulation points and let $f:U\setminus S\to \mathbb C$ be a holomorphic function. Let furthermore $D$ be a bounded domain with piecewise $C^1$-boundary $C$, consisting of finitely many components, such that $\bar D\subset U$. As usual, we assume that $C$ is oriented such that $D$ lies always on the left with respect to the direction
of the parametrization. If $f$ only has first order poles on $C$, we may use a decomposition
$$
f = g + \sum_{k=1}^m f_k,
$$
where $f_k$ is holomorphic away from a single first order pole $z_k\in C$ and $g$ has only singularities in $z_{m+1}, z_{m+2}, \ldots\notin C$. Then $\varphi_k(z) := f_k(z)(z-z_k)$ is holomorphic.
Let
$$
F_k(z) := \frac{1}{2\pi \mathrm i} \int_C \frac{\varphi_k(\xi)}{\xi-z}\,\mathrm d\xi.
$$
By Cauchy's integral formula we have $F_k(z) = \varphi_k(z)$ provided $z\in D$. Furthermore
$$\begin{aligned}
F^+_k(z_k) & := \lim_{D\ni z \to z_k}F_k(z) = \lim_{D\ni z \to z_k}\varphi_k(z) = \lim_{D\ni z \to z_k}f_k(z)(z-z_k)\\
&\textcolor{white}{:}=\operatorname{res}_{z_k}f_k(z)=\operatorname{res}_{z_k}f(z)
\end{aligned}
$$
and
$$
F_k(z_k) = \operatorname{PV}\frac{1}{2\pi \mathrm i} \int_C \frac{\varphi_k(\xi)}{\xi-z_k}\,\mathrm d\xi= \operatorname{PV}\frac{1}{2\pi \mathrm i} \int_C f_k(\xi)\,\mathrm d\xi.
$$
According to the Sokhotski\u{\i}-Plemelj formula (see \cite[p. 385, (3)]{hazewinkel} or 
\cite[Chapter 3]{russisch}) we find
$$
F^+_k(z_k) = F_k(z_k) + \left(1-\frac{\alpha_k}{2\pi}\right)\varphi_k(z_k),
$$
where $\alpha_k$ is the interior angle of $C$ in $z_k$ and hence
$$
\operatorname{res}_{z_k}f(z) = \operatorname{PV}\frac{1}{2\pi \mathrm i}\int_C f_k(\xi)\,\mathrm d \xi + \left(1-\frac{\alpha_k}{2\pi}\right)\operatorname{res}_{z_k}f(z)
$$
and after rearranging
$$
\operatorname{PV}\frac{1}{2\pi \mathrm i}\int_C f_k(\xi)\,\mathrm d\xi = \frac{\alpha_k}{2\pi} \operatorname{res}_{z_k}f(z).
$$
Therefore we find
$$\begin{aligned}
\operatorname{PV}\frac{1}{2\pi \mathrm i}\int_C f(\xi)\,\mathrm d\xi  & = \operatorname{PV}\frac{1}{2\pi \mathrm i}\int_C \left(g(\xi)+\sum_{k=1}^m f_k(\xi)\right)\,\mathrm d\xi\\
& = \sum_{k\geqslant m+1}\operatorname{res}_{z_k}f(z) + \sum_{k=1}^m \frac{\alpha_k}{2\pi}\operatorname{res}_{z_k}f(z)\\
& = \sum_{k\geqslant 1} n_{z_k}(C) \operatorname{res}_{z_k}f(z).
\end{aligned}$$
\subsection*{Acknowledgement}
We would like to thank the referees for their valuable remarks 
which greatly helped to improve this article.

\end{document}